\tikzstyle{startstop} = [ellipse, minimum width=2cm, minimum height=1cm, text centered, text width=2cm, draw=black]
\tikzstyle{process} = [rectangle, minimum width=2.5cm, minimum height=2cm, text centered, text width=2cm, draw=black]
\tikzstyle{processlong} = [rectangle, minimum width=7cm, minimum height=2cm, text centered, text width=6.5cm, draw=black]
\tikzstyle{decision} = [diamond, minimum width=1cm, minimum height=1cm, text centered, text width=2cm, draw=black]
\tikzstyle{decisionbig} = [diamond, minimum width=1cm, minimum height=1cm, text centered, text width=2.5cm, draw=black]
\tikzstyle{SCC} = [circle, minimum width=0.4cm, text centered, text width=0.4cm, draw=white]
\tikzstyle{arrow} = [thick,->,>=stealth]
\newtheorem{theorem}{Theorem}[section]
\newtheorem{corollary}{Corollary}[theorem]
\newtheorem{lemma}[theorem]{Lemma}
\newtheorem{proposition}[theorem]{Proposition}
\theoremstyle{definition}
\theoremstyle{remark}
\theoremstyle{definition}
\title{On the number of Hamiltonian cycles in the generalized Petersen graph}
\author{Jan Kristian Haugland \\ \texttt{admin@neutreeko.net}}
\begin{document}

\maketitle

\section{Introduction}
Let $n, k$ be positive integers. The generalized Petersen graph $G(n, k)$ is a cubic graph with vertex set $$V(G(n, k)) = \{v_i\}_{0 \leq i < n} \cup \{w_i\}_{0 \leq i < n}$$ and edge set $$E(G(n, k)) = \{v_i v_{i+1}\}_{0 \leq i < n} \cup \{w_i w_{i+k}\}_{0 \leq i < n} \cup \{v_i w_i\}_{0 \leq i < n}$$ where the indices are taken modulo $n$. A Hamiltonian cycle of a graph is a cycle of edges that visits each vertex exactly once. Schwenk \cite{schwenk} found the number of Hamiltonian cycles in $G(n, k)$ for $k=2$, and asked for a corresponding enumeration for higher values of $k$. The objective of this paper is to give initial conditions and linear recurrence relations in the cases $k=3$ and $k=4$, from which explicit formulae can be constructed based on the roots of the characteristic polynomials.

To this end, we introduce the graph $G'(n, k)$ with the same vertex set and edge set as $G(n, k)$, except for the additional vertices $L_0, \dotsc, L_k$, $R_0, \dotsc, R_k$, and the edges $v_0 v_{n-1}$, $w_0 w_{n-k}, \dotsc, w_{k-1} w_{n-1}$ replaced by $v_0 L_0$, $w_0 L_1, \dotsc,$ $w_{k-1} L_k$, $v_{n-1} R_0$, $w_{n-k} R_1, \dotsc, w_{n-1} R_k$. Figure~\ref{gdash} shows this graph in the case $k=3$.
\begin{figure}
\centering
\includegraphics[width=1.0\linewidth]{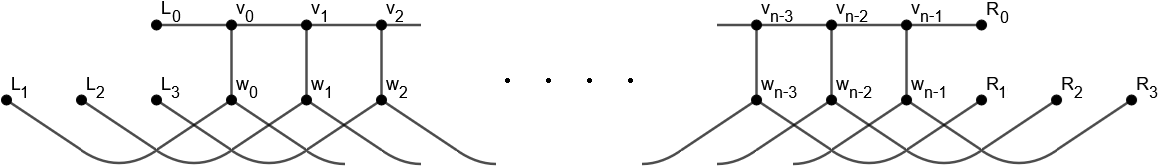}
\caption{The graph $G'(n, 3)$}
\label{gdash}
\end{figure}
The visible edges in the figure are called the \textit{outer} edges. Generally, those that are incident with at least one of $v_0, v_1, \dotsc, v_{k-1}$, $w_0, w_1, \dotsc, w_{k-1}$ are called the outer edges on the left hand side, and those that are incident with at least one of $v_{n-k}, v_{n-k+1}, \dotsc, v_{n-1}$, $w_{n-k}, w_{n-k+1}, \dotsc, w_{n-1}$ are called the outer edges on the right hand side.

Consider a subgraph $\Gamma$ of $G'(n, k)$ with the same vertex set as that of $G'(n, k)$ for which each vertex of degree 3 in $G'(n, k)$ has degree 2 in $\Gamma$, and containing no cycles except possibly one that constitutes the whole of $\Gamma$. Such subgraphs are called \textit{admissible} subgraphs of $G'(n, k)$. Each admissible subgraph $\Gamma$ is associated with a list of pairs of the vertices among $L_0, \dotsc, L_k$, $R_0$, $R_1, \dotsc, R_k$ of degree 1 in $\Gamma$ (called \textit{loose ends} henceforth) that are in the same connected component. This list is called a \textit{list of connections}.

For any given $k$, there is a finite number of possible intersections of an admissible subgraph and the outer edges on either side, which we label $S_1, S_2, \dotsc$. For $k=3$, there are 33 possibilities, as seen in Table~\ref{intersections}. For $k=4$ and $k=5$, the number of possibilities is 85 and 217, respectively.

\begin{table}
\begin{tabular}{|cc|cc|cc|}\hline
& & & & & \\
\raisebox{12pt}{$S_1$} & \includegraphics[width=0.2\linewidth]{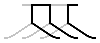} & \raisebox{12pt}{$S_{12}$} & \includegraphics[width=0.2\linewidth]{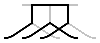} & \raisebox{12pt}{$S_{23}$} & \includegraphics[width=0.2\linewidth]{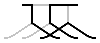} \\
\raisebox{12pt}{$S_2$} & \includegraphics[width=0.2\linewidth]{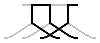} & \raisebox{12pt}{$S_{13}$} & \includegraphics[width=0.2\linewidth]{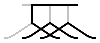} & \raisebox{12pt}{$S_{24}$} & \includegraphics[width=0.2\linewidth]{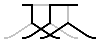} \\
\raisebox{12pt}{$S_3$} & \includegraphics[width=0.2\linewidth]{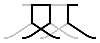} & \raisebox{12pt}{$S_{14}$} & \includegraphics[width=0.2\linewidth]{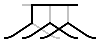} & \raisebox{12pt}{$S_{25}$} & \includegraphics[width=0.2\linewidth]{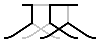} \\
\raisebox{12pt}{$S_4$} & \includegraphics[width=0.2\linewidth]{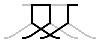} & \raisebox{12pt}{$S_{15}$} & \includegraphics[width=0.2\linewidth]{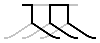} & \raisebox{12pt}{$S_{26}$} & \includegraphics[width=0.2\linewidth]{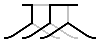} \\
\raisebox{12pt}{$S_5$} & \includegraphics[width=0.2\linewidth]{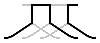} & \raisebox{12pt}{$S_{16}$} & \includegraphics[width=0.2\linewidth]{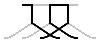} & \raisebox{12pt}{$S_{27}$} & \includegraphics[width=0.2\linewidth]{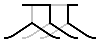} \\
\raisebox{12pt}{$S_6$} & \includegraphics[width=0.2\linewidth]{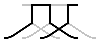} & \raisebox{12pt}{$S_{17}$} & \includegraphics[width=0.2\linewidth]{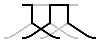} & \raisebox{12pt}{$S_{28}$} & \includegraphics[width=0.2\linewidth]{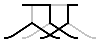} \\
\raisebox{12pt}{$S_7$} & \includegraphics[width=0.2\linewidth]{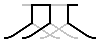} & \raisebox{12pt}{$S_{18}$} & \includegraphics[width=0.2\linewidth]{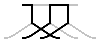} & \raisebox{12pt}{$S_{29}$} & \includegraphics[width=0.2\linewidth]{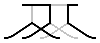} \\
\raisebox{12pt}{$S_8$} & \includegraphics[width=0.2\linewidth]{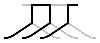} & \raisebox{12pt}{$S_{19}$} & \includegraphics[width=0.2\linewidth]{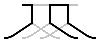} & \raisebox{12pt}{$S_{30}$} & \includegraphics[width=0.2\linewidth]{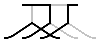} \\
\raisebox{12pt}{$S_9$} & \includegraphics[width=0.2\linewidth]{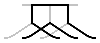} & \raisebox{12pt}{$S_{20}$} & \includegraphics[width=0.2\linewidth]{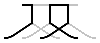} & \raisebox{12pt}{$S_{31}$} & \includegraphics[width=0.2\linewidth]{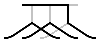} \\
\raisebox{12pt}{$S_{10}$} & \includegraphics[width=0.2\linewidth]{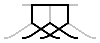} & \raisebox{12pt}{$S_{21}$} & \includegraphics[width=0.2\linewidth]{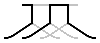} & \raisebox{12pt}{$S_{32}$} & \includegraphics[width=0.2\linewidth]{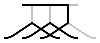} \\
\raisebox{12pt}{$S_{11}$} & \includegraphics[width=0.2\linewidth]{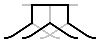} & \raisebox{12pt}{$S_{22}$} & \includegraphics[width=0.2\linewidth]{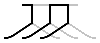} & \raisebox{12pt}{$S_{33}$} & \includegraphics[width=0.2\linewidth]{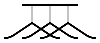} \\ \hline
\end{tabular}
\caption{Possible intersections of an admissible subgraph and the outer edges on either side for $k=3$}
\label{intersections}
\end{table}
Suppose $\Gamma$ is an admissible subgraph of $G'(n, k)$. If the intersection of $\Gamma$ and the outer edges on the left hand side is $S_{n_L}$, the intersection of $\Gamma$ and the outer edges on the right hand side is $S_{n_R}$ and the list of connections is $\lambda$, we say that the $signature$ of $\Gamma$ is $(n_L, n_R, \lambda)$. For $k=3$, there are 1705 distinct signatures in which all the loose ends are accounted for in the list of connections, and two loose ends are in the same pair if they are connected through the outer edges on one side. For example, if $n_L=8$, then necessarily $(L_1, L_2) \in \lambda$. Not all of them can be realized; for example, no admissible subgraph can have signature $(1, 7, \{(R_0, R_3)\})$. For $k = 4$ and $k = 5$, the number of signatures is 25675 and 455835, respectively.

For a fixed $k$, let
\begin{equation}\label{sequence}
u^{(n_L, n_R, \lambda)}_n
\end{equation}
denote the number of admissible subgraphs of $G'(n, k)$ with signature $(n_L, n_R, \lambda)$. We are going to find a linear recurrence relation satisfied by all these sequences for $k=3$ and for $k=4$.

Let $H_k$ denote the set of signatures that pass the following test.

\begin{mdframed}\textbf{Checking the hamiltonicity induced by a signature}

\noindent Start with any loose end. After an even number of steps, move from the current loose end to the one it is connected to according to the list of connections. After an odd number of steps, move from $L_i$ to $R_i$ or from $R_i$ to $L_i$, depending on the current loose end. If this procedure traverses all loose ends, then identifying $L_0$ with $v_{n-1}$, $L_1$ with $w_{n-k}$, $\dotsc$, $L_k$ with $w_{n-1}$, $R_0$ with $v_0$, $R_1$ with $w_0$, $\dotsc$, and $R_k$ with $w_{k-1}$ yields a Hamiltonian cycle in $G(n, k)$.
\end{mdframed}

Thus, the number of Hamiltonian cycles in $G(n, k)$ is given by
\begin{equation}\label{hamiltonian}
h_k(n) = \sum_{(n_L, n_R, \lambda) \in H_k} u^{(n_L, n_R, \lambda)}_n
\end{equation}
As a sum of (\ref{sequence}) over a subset of the possible signatures, $h_k(n)$ also satisfies any linear recurrence relation satisfied by all the individual sequences.

\section{The graph \texorpdfstring{$\sigma_k$}{s} of signatures}
The directed graph $\sigma_k$ is defined as follows. $V(\sigma_k)$ is the set of all signatures. Thus, the terms "vertices" and "signatures" in $\sigma_k$ can be used interchangeably, but we will primarily use the latter. Given a signature $(n_L, n_R, \lambda)$, let $\Gamma$ be a hypothetical (not necessarily realizable) admissible subgraph of $G'(n, k)$ having that signature, for some $n$. The graph $G'(n-1, k)$ can be obtained by removing $v_{n-1} R_0$, $v_{n-1} w_{n-1}$ and $w_{n-1} R_k$ from $E(G'(n, k))$ and $R_0$ and $R_k$ from $V(G'(n, k))$, and renaming $v_{n-1}$ to $R_0$, $R_{k-1}$ to $R_k$, $R_{k-2}$ to $R_{k-1}, \dotsc, R_1$ to $R_2$ and $w_{n-1}$ to $R_1$. If $(n_L, n'_R, \lambda')$ is a possible signature of the intersection of $\Gamma$ and $G'(n - 1, k)$ thus constructed, then the arc $((n_L, n'_R, \lambda'), (n_L, n_R, \lambda))$ is in $E(\sigma_k)$.

For the remainder of this Section, we outline the possible signatures of the intersection of $\Gamma$ and $G'(n-1, k)$.

$v_{n-k-1}w_{n-k-1}$ can not be in $E(\Gamma)$ if either $v_{n-k-1}v_{n-k}, \dotsc, v_{n-2}v_{n-1}$, \newline $v_{n-1}w_{n-1}$ and $w_{n-k-1}w_{n-1}$ are all in $E(\Gamma)$, as a cycle not constituting the whole of $\Gamma$ would be formed, or if $v_{n-k-1}$ and $w_{n-k-1}$ are connected through the outer edges on the right hand side to two loose ends $x$ and $y$ such that $(x, y) \not \in \lambda$.

There are five possibilities for which of the edges $v_{n-1} R_0$, $v_{n-1} w_{n-1}$, $w_{n-1} R_k$ are in $E(\Gamma)$: All three, any two (which covers three possibilities), or only $v_{n-1} w_{n-1}$. Here is how to construct $\lambda'$ from the list of connections $\lambda$ in $\Gamma$ in each case, before the renaming step (recall that $v_{n-1}$ is to be renamed to $R_0$, $R_{k-1}$ to $R_k$ etc.).

\begin{itemize}
\item If all three edges are in $E(\Gamma)$ and $\lambda$ contains other pairs than $(R_0, R_k)$, then remove $(R_0, R_k)$. Note that if $\lambda$ does not contain other pairs than $(R_0, R_k)$, then the edges other than those three form a cycle.
\item If only $v_{n-1} R_0$ and $v_{n-1} w_{n-1}$ are in $E(\Gamma)$, replace $R_0$ by $w_{n-1}$.
\item If only $v_{n-1} w_{n-1}$ and $w_{n-1} R_k$ are in $E(\Gamma)$, replace $R_k$ by $v_{n-1}$.
\item If only $v_{n-1} R_0$ and $w_{n-1} R_k$ are in $E(\Gamma)$, replace $R_0$ by $v_{n-1}$ and $R_k$ by $w_{n-1}$.
\item Suppose only $v_{n-1} w_{n-1}$ is in $E(\Gamma)$. If $\lambda$ is empty, then add $(v_{n-1}, w_{n-1})$. Otherwise, some $(x, y) \in \lambda$ is to be replaced by $(x, v_{n-1})$, $(y, w_{n-1})$. Any loose end that is connected to $v_{n-1}$ through the edges on the right hand side other than $v_{n-1} w_{n-1}$ must take the role of $x$, and any loose end that is connected to $w_{n-1}$ through the edges on the right hand side other than $v_{n-1} w_{n-1}$ must take the role of $y$. If one of $x$ and $y$ is determined in this way, the other one is determined by $\lambda$. If neither of them is determined in this way, any $(x, y) \in \lambda$ for which $x$ and $y$ are not connected through the outer edges on one side will do.
\end{itemize}

\section{Linear recurrence relations and characteristic polynomials}
If $k=3$, then simply by inspecting Table~\ref{intersections}, we find that the initial value of (\ref{sequence}) for $n=k$ is 1 when $(n_L, n_R, \lambda)$ is one of $(1, 1, \{(R_0, R_3), (R_1, R_2)\})$, $(2, 2, \{(L_3, R_0), (R_1, R_2)\})$, $(3, 3, \{(L_2, R_1), (R_0, R_3)\})$ and so on, and 0 in all other cases. Similarly, the initial values follow directly from the corresponding set $\{S_i\}$ for other values of $k$. For $n > k$, we have
\begin{equation}\label{sigma_relation}
u^{(n_L, n_R, \lambda)}_n = \sum_{((n_L, n'_R, \lambda'), (n_L, n_R, \lambda)) \in E(\sigma_k)} u^{(n_L, n_R', \lambda')}_{n-1}
\end{equation}
from which we we would like to obtain a linear recurrence relation $$u^{(n_L, n_R, \lambda)}_n + p_{d-1} u^{(n_L, n_R, \lambda)}_{n-1} + \dotsc + p_0 u^{(n_L, n_R, \lambda)}_{n-d}=0$$ that holds for all $n \geq n_0$ for some $n_0$. Such a recurrence relation is associated with a characteristic polynomial $$p(x) = p_dx^d + p_{d-1}x^{d-1}d + \dotsc + p_1x + p_0$$ with $p_d=1$. We say that $p(x)$ is a characteristic polynomial for $\{u_n\}$ for $n \geq n_0$ if $$p_d u_n + p_{d-1} u_{n-1} + \dotsc + p_0 u_{n-d}=0$$ holds for all $n \geq n_0$.

\begin{lemma}\label{product}
Let $\{u_n\}_{n \geq 1}$ be a sequence. The sum of two or more characteristic polynomials for $\{u_n\}$ is also a characteristic polynomial for $\{u_n\}$, and any product of a characteristic polynomial for $\{u_n\}$ by any polynomial is also a characteristic polynomial for $\{u_n\}$, for sufficiently large $n$. {\hfill \ensuremath{\Box}}
\end{lemma}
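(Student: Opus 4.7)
The plan is to reformulate the definition in the language of the polynomial ring acting on sequences. For any polynomial $p(x) = \sum_{i=0}^{d} p_i x^i$ and any sequence $u = \{u_n\}$, define a new sequence $p \cdot u$ by
$$(p \cdot u)_m := \sum_{i=0}^{d} p_i u_{m+i}.$$
Under this convention, $p(x)$ is a characteristic polynomial for $\{u_n\}$ for $n \geq n_0$ if and only if $(p \cdot u)_m = 0$ for every $m \geq n_0 - d$; in other words, the sequence $p \cdot u$ is eventually zero. Both claims of the lemma then reduce to statements about the set of polynomials whose action on $\{u_n\}$ is eventually zero.

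For the sum, given characteristic polynomials $p_1, \dotsc, p_r$ with individual thresholds $n_1, \dotsc, n_r$, pad their coefficient sequences with leading zeros to reach a common degree $d$. Linearity of the action gives $(p_1 + \dotsb + p_r) \cdot u = \sum_j (p_j \cdot u)$, and the right-hand side vanishes at every index $m \geq \max_j n_j - d$, since each summand does. Hence $p_1 + \dotsb + p_r$ is a characteristic polynomial from $n \geq \max_j n_j$ onwards.

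For the product, let $p$ be a characteristic polynomial for $\{u_n\}$ from $n \geq n_0$, with $\deg p = d$, and let $q(x) = \sum_{j=0}^{e} q_j x^j$ be arbitrary. A direct expansion using the convolution formula for the coefficients of $pq$ shows that $(pq) \cdot u = q \cdot (p \cdot u)$. Writing $v := p \cdot u$, which vanishes at every index $\geq n_0 - d$, one has $(q \cdot v)_m = \sum_{j=0}^e q_j v_{m+j}$, and each summand vanishes as soon as $m \geq n_0 - d$. Translating back, $pq$ (of degree $d+e$) satisfies its associated recurrence for all $n \geq n_0 + e$.

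I do not anticipate any substantive obstacle: the argument reduces to linearity and associativity of the polynomial action on sequences, plus straightforward bookkeeping of the thresholds, which is precisely the content of the qualifier ``for sufficiently large $n$'' in the statement.
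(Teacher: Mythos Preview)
Your argument is correct; the paper itself offers no proof of this lemma (it is stated with a tombstone and left as an elementary observation), so your write-up actually supplies more detail than the original. The only quibble is a bookkeeping slip in the sum case---the common vanishing index should be $\max_j(n_j-d_j)$ rather than $\max_j n_j - d$---but since the lemma only claims validity ``for sufficiently large $n$,'' this does not affect the conclusion.
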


\begin{corollary}\label{gcd}
Let $\{u_n\}_{n \geq 1}$ be a sequence. The greatest common divisor of two or more characteristic polynomials for $\{u_n\}$ is also a characteristic polynomial for $\{u_n\}$, for sufficiently large $n$. {\hfill \ensuremath{\Box}}
\end{corollary}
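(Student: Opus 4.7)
The plan is to reduce the statement to Lemma \ref{product} via Bézout's identity, treating polynomials as living in $\mathbb{Q}[x]$ (or the appropriate field of coefficients), where the polynomial ring is a principal ideal domain. First I would handle the case of exactly two characteristic polynomials $p(x)$ and $q(x)$; the general case of $m \geq 2$ polynomials then follows by a straightforward induction using the identity $\gcd(p_1,\dotsc,p_m) = \gcd(\gcd(p_1,\dotsc,p_{m-1}),p_m)$.

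For the two-polynomial case, since $\mathbb{Q}[x]$ is a PID, Bézout's identity yields polynomials $a(x), b(x)$ such that
\[
\gcd(p,q) = a(x)\,p(x) + b(x)\,q(x).
\]
By the product part of Lemma \ref{product}, $a(x)p(x)$ is a characteristic polynomial for $\{u_n\}$ for all $n \geq n_1$ for some threshold $n_1$, and likewise $b(x)q(x)$ is a characteristic polynomial for $n \geq n_2$. Applying the sum part of Lemma \ref{product} on the interval $n \geq \max(n_1, n_2)$, the linear combination $a(x)p(x) + b(x)q(x) = \gcd(p,q)$ is a characteristic polynomial for $\{u_n\}$ for sufficiently large $n$, as required.

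The only mild subtlety I anticipate is bookkeeping around the "sufficiently large $n$" qualifier: each invocation of Lemma \ref{product} introduces its own threshold, but since we compose only finitely many such operations (bounded by $m$ applications in the inductive step), taking the maximum of finitely many thresholds is harmless. A secondary point to mention is that the $\gcd$ is well defined up to a unit in $\mathbb{Q}[x]$, and we normalize it to be monic so that it fits the definition of characteristic polynomial given in the text; the Bézout coefficients $a(x), b(x)$ absorb this normalization without affecting the argument. No serious obstacle is expected, which is consistent with the fact that the authors state the corollary without proof.
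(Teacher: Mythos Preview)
Your proposal is correct and follows exactly the argument the paper leaves implicit: the corollary is stated with an immediate \ensuremath{\Box} and no proof, signalling that it is the standard Bézout consequence of Lemma~\ref{product}. Your handling of the thresholds and of the monic normalization is appropriate and matches the level of rigor the paper maintains elsewhere.
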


Generally, for any directed graph $D$, we can define sequences $\{u^v_n\}_{v \in E(D)}$ by a set of initial conditions and the relation
\begin{equation}\label{general_relation}
u^v_n = \sum_{(v', v) \in E(D)} u^{v'}_{n-1}
\end{equation} corresponding to (\ref{sigma_relation}).

The monic polynomial of smallest degree that is a characteristic polynomial for all these sequences for a particular set $I$ of initial conditions and sufficiently large $n$ is called the \textit{minimal} characteristic polynomial on $(D, I)$. A polynomial that is a characteristic polynomial for all these sequences for \textit{any} set of initial conditions and sufficiently large $n$ is called a \textit{universal} characteristic polynomial on $D$, and the monic one with the smallest degree is called the \textit{minimal universal} characteristic polynomial on $D$. By Corollary~\ref{gcd}, the minimal characteristic polynomial on $(D, I)$ divides the minimal universal characteristic polynomial on $D$ regardless of $I$. Let $I_k$ denote the set of initial conditions for the sequences (\ref{sequence}).

In order to prove that a linear recurrence relation is valid for all of the sequences for all sufficiently large $n$, it suffices to verify that it holds for all the sequences for one value of $n$.

\begin{lemma}\label{recurrence}
Suppose $u^{(i)}_n = f^{(i)}(u^{(1)}_{n-1}, u^{(2)}_{n-1}, \dotsc, u^{(N)}_{n-1})$ for $n \geq n_0$, $1 \leq i \leq N$ where each $f^{(i)}$ is a linear function in $N$ variables, and that $g$ is a linear function such that
\begin{equation}
g(u^{(i)}_n, u^{(i)}_{n-1}, \dotsc, u^{(i)}_{n-d}) = 0
\end{equation}
for some $n=n_1 \geq n_0+d$ and all $i$, $1 \leq i \leq N$. Then (5) also holds for all $n \geq n_1$ and all $i$, $1 \leq i \leq N$.
\end{lemma}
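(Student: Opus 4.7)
The plan is a straightforward induction on $n$, whose engine is the commutation of the linear functional $g$ with the linear update given by the $f^{(i)}$. Since each $f^{(i)}$ is linear, write $f^{(i)}(x_1,\ldots,x_N) = \sum_{k=1}^N a^{(i)}_k x_k$, and since $g$ is linear, write $g(y_0,\ldots,y_d) = \sum_{j=0}^d c_j y_j$. The base case $n = n_1$ is exactly the hypothesis.

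For the inductive step, I would fix $n \geq n_1 + 1$ and assume the conclusion at $n-1$ for every $i$. The condition $n_1 \geq n_0 + d$ together with $n \geq n_1 + 1$ gives $n - j \geq n_0 + 1 \geq n_0$ for each $j \in \{0,1,\ldots,d\}$, so the recurrence $u^{(i)}_{n-j} = f^{(i)}(u^{(1)}_{n-j-1},\ldots,u^{(N)}_{n-j-1})$ is applicable to every term appearing in $g(u^{(i)}_n,\ldots,u^{(i)}_{n-d})$. Substituting and exchanging the order of summation,
\[
\sum_{j=0}^d c_j u^{(i)}_{n-j} \;=\; \sum_{j=0}^d c_j \sum_{k=1}^N a^{(i)}_k u^{(k)}_{n-1-j} \;=\; \sum_{k=1}^N a^{(i)}_k \sum_{j=0}^d c_j u^{(k)}_{(n-1)-j} \;=\; 0,
\]
where the final equality is the inductive hypothesis applied to each $k$. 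This closes the induction and yields the conclusion for all $n \geq n_1$.

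There is no substantial obstacle here. The only care required is index bookkeeping — specifically, verifying that the recurrence $u^{(i)}_m = f^{(i)}(\cdots)$ is in force for every index $m$ that we rewrite — and this is exactly what the hypothesis $n_1 \geq n_0 + d$ was inserted to guarantee. Conceptually, the lemma says that the locus cut out by the linear constraint $g(\cdot,\ldots,\cdot) = 0$ in the space of consecutive $(d+1)$-tuples is forward-invariant under the linear shift driven by the $f^{(i)}$, so establishing the relation simultaneously at one moment $n_1$ suffices to propagate it to all later moments. This is why Lemma \ref{recurrence} will be the workhorse used to certify candidate recurrences without solving any eigenvalue problem explicitly.
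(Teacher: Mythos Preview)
Your proof is correct and follows the same approach as the paper: an induction on $n$ whose inductive step is the commutation of the linear map $g$ with the linear update $f^{(i)}$, which you carry out by expanding both in coefficients and swapping the order of summation. The paper writes this commutation more abstractly as $g(f^{(i)}(\cdot),\ldots,f^{(i)}(\cdot)) = f^{(i)}(g(\cdot),\ldots,g(\cdot))$, and your version is in fact slightly more careful in verifying that the index range $n-j \geq n_0$ is respected.
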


\begin{proof}
By induction on $n$. If (5) holds for all $i$, then
\begin{align*}
g(u^{(i)}_{n+1}, u^{(i)}_n, \dotsc, u^{(i)}_{n-d+1}) = & g(f^{(i)}(u^{(1)}_n, u^{(2)}_n, \dotsc, u^{(N)}_n), \\
& \hspace{0.35cm} f^{(i)}(u^{(1)}_{n-1}, u^{(2)}_{n-1}, \dotsc, u^{(N)}_{n-1}), \dotsc, \\
& \hspace{0.35cm} f^{(i)}(u^{(1)}_{n-d}, u^{(2)}_{n-d}, \dotsc, u^{(N)}_{n-d})) \\
= & f^{(i)}(g(u^{(1)}_n, u^{(1)}_{n-1}, \dotsc, u^{(1)}_{n-d}), \\
& \hspace{0.7cm} g(u^{(2)}_n, u^{(2)}_{n-1}, \dotsc, u^{(2)}_{n-d}), \dotsc, \\
& \hspace{0.7cm} g(u^{(N)}_n, u^{(N)}_{n-1}, \dotsc, u^{(N)}_{n-d})) \\
= & f^{(i)}(0, 0, \dotsc, 0) \\
= & 0
\end{align*}
for all $i$.
\end{proof}

A directed graph $D$ can be partitioned into \textit{strongly connected components} (SCCs), i.e., maximal subgraphs in which any vertex can reach any other. An SCC $\xi \subseteq D$ for which there is no arc from a vertex in $D \backslash \xi$ to a vertex in $\xi$ is called a source SCC, and one for which there is no arc from a vertex in $\xi$ to a vertex in $D \backslash \xi$ is called a sink SCC.

Suppose that for each SCC $\xi \subseteq D$, the minimal universal characteristic polynomial on $\xi$ is known, called $p^{(\xi)}(x)$ of degree $d_{\xi}$. Then Algorithm 1 produces a characteristic polynomial on $(D, I)$. In view of Lemma~\ref{product}, the step in the bottom of the flowchart does not invalidate the universal characteristic polynomials on the remaining SCCs.
\newpage
\begin{mdframed}\textbf{Algorithm 1}

For one vertex $v$ from each sink SCC of $D$, set $P^{(v)}(x) = 1$, let $\rho(v) \subseteq D$ be induced by all vertices that can reach $v$, define the sequences $\{u^{(w)}_n\}_{w \in \rho(v)}$ by $I$ and (\ref{general_relation}), and execute this flowchart. Return the least common multiple of the polynomials it produces.
\newline

\begin{center}
\begin{tikzpicture}[node distance=2cm]
\node (start) [startstop] {Start};
\node (fin) [startstop, right of=start, xshift=3cm] {Finish};
\node (select) [process, below of=start, yshift=-1.5cm] {Select some source SCC $\xi \subseteq \rho(v)$};
\node (anymore) [decision, right of=select, xshift=3cm] {Any more SCCs?};
\node (seq0) [decisionbig, below of=select, yshift=-2.5cm] {Is $u^{(w)}_n=0$ for sufficiently high $n$ for all $w \in \xi$?};
\node (disregard) [process, right of=seq0, xshift=3cm] {Delete $\xi$ from $\rho(v)$};
\node (update) [processlong, below of=seq0, xshift=2.5cm, yshift=-2.5cm] {Set $P^{(v)}(x) \leftarrow p^{(\xi)}(x) P^{(v)}(x)$. For each $w \in \rho(v)$, replace each value $u^{(w)}_n$ by $p^{(\xi)}_{d_{\xi}} u^{(w)}_{n+d_{\xi}} + \dotsc + p^{(\xi)}_0 u^{(w)}_n$.};
\draw [arrow] (start) -- (select);
\draw [arrow] (select) -- (seq0);
\draw [arrow] (seq0) -- node[anchor=south]{Yes} (disregard);
\draw [arrow] (disregard) -- (anymore);
\draw [arrow] (anymore) -- node[anchor=south]{Yes} (select);
\draw [arrow] (anymore) -- node[anchor=east]{No} (fin);
\draw [thick,->,>=stealth] (0,-10.1) -- node[anchor=east]{No} (0,-11.5);
\draw [thick,->,>=stealth] (5,-11.5) -- (5,-9);
\end{tikzpicture}
\end{center}
\end{mdframed}

Analysis of $\sigma_k$ has shown that for $k=3$, each SCC is isomorphic to one out of 7 directed graphs with at most 24 vertices, and for $k=4$, each SCC is isomorphic to one out of 11 directed graphs with at most 122 vertices. These are small enough that we can find universal characteristic polynomials using iterated changes of variables. In order to find the minimal one for an SCC $\xi\subseteq D$, we can run the iterated changes of variables procedure several times and take the greatest common divisor, and/or we can check if some divisor of any given universal characteristic polynomial is also a characteristic polynomial if we set 
\[
u^{(v)}_k=
\begin{cases}
    1 &\quad\text{if }v = v_0\\
    0 &\quad\text{otherwise,}
\end{cases}
\]
regardless of $v_0 \in V(\xi)$. We set the initial values for $n=k$, as (\ref{sequence}) is only defined for $n \geq k$.

\section{Examples}
As an example of the analysis for a single SCC, consider $\xi \subset \sigma_3$ consisting of
\begin{align*}
\{ & (4, 32, \{(L_2, R_2), (L_3, R_1)\}), (4, 30, \{(L_2, R_1), (L_3, R_0)\}), \\
& (4, 26, \{(L_2, R_3), (L_3, R_0)\}), (4, 11, \{(L_2, R_2), (L_3, R_3)\}), \\
& (4, 27, \{(L_2, R_1), (L_3, R_2), (R_0, R_3)\}), (4, 16, \{(L_2, R_2), (L_3, R_1)\}), \\
& (4, 4, \{(L_2, R_1), (L_3, R_0)\}), (4, 23, \{(L_2, R_3), (L_3, R_1), (R_0, R_2)\}), \\
& (4, 10, \{(L_2, R_2), (L_3, R_1)\}), (4, 13, \{(L_2, R_2), (L_3, R_3), (R_0, R_1)\}), \\
& (4, 33, \{(L_2, R_1), (L_3, R_2), (R_0, R_3)\}), (4, 33, \{(L_2, R_3), (L_3, R_1), (R_0, R_2)\}), \\
& (4, 33, \{(L_2, R_2), (L_3, R_3), (R_0, R_1)\})\}
\end{align*}
and the corresponding sequences $\{u^{(1)}_n\}$ through $\{u^{(13)}_n\}$. The relations for these sequences, based on (\ref{sigma_relation}), are as follows for $n \geq 4$.
\newline

\begin{tabular}{ll}
$u_n^{(1)} = u_{n-1}^{(11)} + u_{n-1}^{(12)}$ & $u_n^{(8)} = u_{n-1}^{(5)}$ \\
$u_n^{(2)} = u_{n-1}^{(1)} + u_{n-1}^{(9)}$ & $u_n^{(9)} = u_{n-1}^{(8)}$ \\
$u_n^{(3)} = u_{n-1}^{(2)} + u_{n-1}^{(7)}$ & $u_n^{(10)} = u_{n-1}^{(8)}$ \\
$u_n^{(4)} = u_{n-1}^{(3)}$ & $u_n^{(11)} = u_{n-1}^{(10)} + u_{n-1}^{(13)}$ \\
$u_n^{(5)} = u_{n-1}^{(4)}$ & $u_n^{(12)} = u_{n-1}^{(11)}$ \\
$u_n^{(6)} = u_{n-1}^{(5)}$ & $u_n^{(13)} = u_{n-1}^{(12)}$ \\
$u_n^{(7)} = u_{n-1}^{(6)}$ & \\
\end{tabular}
\newline
\newline
Using only $\{u_n^{(1)}\}$, $\{u_n^{(2)}\}$, $\{u_n^{(3)}\}$ and $\{u_n^{(11)}\}$ (those with more than one term in their relation), we have $u_n^{(4)} = u_{n-1}^{(3)}$, $u_n^{(5)} = u_{n-2}^{(3)}$, $u_n^{(6)} = u_{n-3}^{(3)}$, $u_n^{(7)} = u_{n-4}^{(3)}$, $u_n^{(8)} = u_{n-3}^{(3)}$, $u_n^{(9)} = u_{n-4}^{(3)}$, $u_n^{(10)} = u_{n-4}^{(3)}$, $u_n^{(12)} = u_{n-1}^{(11)}$, $u_n^{(13)} = u_{n-2}^{(11)}$, and the relations
\begin{align}
u_n^{(1)} = u_{n-1}^{(11)} + u_{n-2}^{(11)}\\
u_n^{(2)} = u_{n-1}^{(1)} + u_{n-5}^{(3)}\\
u_n^{(3)} = u_{n-1}^{(2)} + u_{n-5}^{(3)}\\
u_n^{(11)} = u_{n-5}^{(3)} + u_{n-3}^{(11)}.
\end{align}
(6) and (9) gives
\begin{align}
u_n^{(1)} & = u_{n-6}^{(3)} + u_{n-4}^{(11)} + u_{n-7}^{(3)} + u_{n-5}^{(11)} \nonumber \\
& = u_{n-3}^{(1)} + u_{n-6}^{(3)} + u_{n-7}^{(3)}.
\end{align}
(8) and (10) gives
\begin{align}
u_n^{(1)} & = u_{n-3}^{(1)} + u_{n-7}^{(2)} + u_{n-11}^{(3)} + u_{n-8}^{(2)} + u_{n-12}^{(3)} \nonumber \\
& = u_{n-3}^{(1)} + u_{n-7}^{(2)} + u_{n-8}^{(2)} + u_{n-5}^{(1)} - u_{n-8}^{(1)} \nonumber \\
& = u_{n-3}^{(1)} + u_{n-5}^{(1)} - u_{n-8}^{(1)} + u_{n-7}^{(2)} + u_{n-8}^{(2)}
\end{align}
and (7) and (8) gives
\begin{align}
u_n^{(2)} & = u_{n-1}^{(1)} + u_{n-6}^{(2)} + u_{n-10}^{(3)} \nonumber \\
& = u_{n-1}^{(1)} + u_{n-6}^{(2)} + u_{n-5}^{(2)} - u_{n-6}^{(1)} \nonumber \\
& = u_{n-1}^{(1)} - u_{n-6}^{(1)} + u_{n-5}^{(2)} + u_{n-6}^{(2)}.
\end{align}
Finally, (11) and (12) gives
\begin{align*}
u_n^{(1)} = &  u_{n-3}^{(1)} + u_{n-5}^{(1)} - u_{n-8}^{(1)} + u_{n-8}^{(1)} - u_{n-13}^{(1)} + u_{n-12}^{(2)} \\
& + u_{n-13}^{(2)} + u_{n-9}^{(2)} - u_{n-14}^{(1)} + u_{n-13}^{(1)} + u_{n-14}^{(2)}\\
= & u_{n-3}^{(1)} + u_{n-5}^{(1)} + u_{n-9}^{(1)} - u_{n-13}^{(1)} - u_{n-1}^{14} + u_{n-5}^{(1)} - u_{n-8}^{(1)} \\
& -u_{n-10}^{(1)} + u_{n-13}^{(1)} + u_{n-6}^{(1)} - u_{n-9}^{(1)} - u_{n-11}^{(1)} + u_{n-14}^{(1)} \\
= & u_{n-3}^{(1)} + 2u_{n-5}^{(1)} + u_{n-6}^{(1)} - u_{n-8}^{(1)} - u_{n-10}^{(1)} - u_{n-11}^{(1)}
\end{align*}
which gives us a universal characteristic polynomial $x^{11}-x^8-2x^6-x^5+x^3+x+1$. However, it factorizes as $$x^{11}-x^8-2x^6-x^5+x^3+x+1 = (x-1)(x^4+x^3+x^2+x+1)(x^6-x^3-x-1),$$ and we can check whether some proper divisor is also a universal characteristic polynomial. Specifically, we can let $j \in \{1, \dotsc, 13\}$ and set $u^{(j)}_3=1$ and $u^{(i)}_3=0$ for $i \neq j$. It turns out that $u^{(i)}_{13} - u^{(i)}_{10}-u^{(i)}_8-u^{(i)}_7=0$ for all $i \in \{1, \dotsc, 13\}$ regardless of $j$. By Lemma~\ref{recurrence}, we then have $u^{(i)}_{n} - u^{(i)}_{n-3}-u^{(i)}_{n-5}-u^{(i)}_{n-6}=0$ for all $n \geq 13$ for all $i$, and since any set of initial values is a linear combination of the ones we have tried out, $x^6-x^3-x-1$ is a universal characteristic polynomial on $(\xi, I_3)$. Since it is irreducible and $\xi$ is nontrivial, it must be minimal.

As an example of an execution of the flowchart in Algorithm 1, let $v=(4, 16, \{(L_2, L_3), (R_1, R_2)\})$. The subgraph $\rho(v) \subset \sigma_3$ consists of the SCCs

\begin{align*}
\xi_1 = \{ & (4, 33, \{(L_2, R_0), (L_3, R_2), (R_1, R_3)\}), (4, 33, \{(L_2, R_0), (L_3, R_1), (R_2, R_3)\}), \\
& (4, 33, \{(L_2, R_0), (L_3, R_3), (R_1, R_2)\})\} \\
\xi_2 = \{ & (4, 33, \{(L_2, R_1), (L_3, R_0), (R_2, R_3)\}), (4, 33, \{(L_2, R_3), (L_3, R_0), (R_1, R_2)\}), \\
& (4, 33, \{(L_2, R_2), (L_3, R_0), (R_1, R_3)\})\} \\
\xi_3 = \{ & (4, 4, \{(L_2, R_0), (L_3, R_1)\}), (4, 26, \{(L_2, R_0), (L_3, R_3)\}), \\
& (4, 11, \{(L_2, R_3), (L_3, R_2)\}), (4, 27, \{(L_2, R_2), (L_3, R_1), (R_0, R_3)\}), \\
& (4, 16, \{(L_2, R_1), (L_3, R_2)\}), (4, 23, \{(L_2, R_1), (L_3, R_3), (R_0, R_2)\}), \\
& (4, 10, \{(L_2, R_1), (L_3, R_2)\}), (4, 30, \{(L_2, R_0), (L_3, R_1)\}), \\
& (4, 13, \{(L_2, R_3), (L_3, R_2), (R_0, R_1)\}), (4, 33, \{(L_2, R_2), (L_3, R_1), (R_0, R_3)\}), \\
& (4, 33, \{(L_2, R_1), (L_3, R_3), (R_0, R_2)\}), (4, 33, \{(L_2, R_3), (L_3, R_2), (R_0, R_1)\}), \\
& (4, 32, \{(L_2, R_1), (L_3, R_2)\})\} \\
\xi_4 = \{ & (4, 32, \{(L_2, R_2), (L_3, R_1)\}), (4, 30, \{(L_2, R_1), (L_3, R_0)\}), \\
& (4, 26, \{(L_2, R_3), (L_3, R_0)\}), (4, 11, \{(L_2, R_2), (L_3, R_3)\}), \\
& (4, 27, \{(L_2, R_1), (L_3, R_2), (R_0, R_3)\}), (4, 16, \{(L_2, R_2), (L_3, R_1)\}), \\
& (4, 4, \{(L_2, R_1), (L_3, R_0)\}), (4, 23, \{(L_2, R_3), (L_3, R_1), (R_0, R_2)\}), \\
& (4, 10, \{(L_2, R_2), (L_3, R_1)\}), (4, 13, \{(L_2, R_2), (L_3, R_3), (R_0, R_1)\}), \\
& (4, 33, \{(L_2, R_1), (L_3, R_2), (R_0, R_3)\}), (4, 33, \{(L_2, R_3), (L_3, R_1), (R_0, R_2)\}), \\
& (4, 33, \{(L_2, R_2), (L_3, R_3), (R_0, R_1)\})\} \\
\xi_5 = \{ & (4, 22, \{(L_2, L_3)\})\} \\
\xi_6 = \{ & (4, 7, \{(L_2, L_3), (R_0, R_3)\})\} \\
\xi_7 = \{ & (4, 19, \{(L_2, L_3), (R_2, R_3)\})\} \\
\xi_8 = \{ & (4, 1, \{(L_2, L_3), (R_0, R_3), (R_1, R_2)\})\} \\
\xi_9 = \{ & (4, 16, \{(L_2, L_3), (R_1, R_2)\}), (4, 4, \{(L_2, L_3), (R_0, R_1)\}), \\
& (4, 26, \{(L_2, L_3), (R_0, R_3)\}), (4, 11, \{(L_2, L_3), (R_2, R_3)\}), \\
& (4, 27, \{(L_2, L_3), (R_0, R_3), (R_1, R_2)\}), (4, 23, \{(L_2, L_3), (R_0, R_2), (R_1, R_3)\}), \\
& (4, 10, \{(L_2, L_3), (R_1, R_2)\}), (4, 30, \{(L_2, L_3), (R_0, R_1)\}), \\
& (4, 13, \{(L_2, L_3), (R_0, R_1), (R_2, R_3)\}), (4, 33, \{(L_2, L_3), (R_0, R_3), (R_1, R_2)\}), \\
& (4, 33, \{(L_2, L_3), (R_0, R_2), (R_1, R_3)\}), (4, 33, \{(L_2, L_3), (R_0, R_1), (R_2, R_3)\}), \\
& (4, 32, \{(L_2, L_3), (R_1, R_2)\})\}
\end{align*}
where $\xi_4$ is the SCC we analyzed earlier, and $v \in \xi_9$. The \textit{condensantion} of $\rho(v)$ (the resulting graph if each SCC is contracted to a single vertex) is shown in Figure~\ref{condensation}.
\newpage
The initial values in the source SCCs $\xi_1$ and $\xi_2$ are all zero, and they are deleted accordingly. $\xi_3$ then becomes a source SCC, and since all the initial values are zero here as well, it is also deleted, and we are left with $$\xi_4 \longrightarrow \xi_5 \longrightarrow \xi_6 \longrightarrow \xi_7 \longrightarrow \xi_8 \longrightarrow \xi_9.$$ We have just seen that the minimal universal characteristic polynomial on $\xi_4$ is $x^6-x^3-x-1$. $\xi_9$ is isomorphic to $\xi_4$, and therefore has the same minimal universal characteristic polynomial. The remaining SCCs are trivial, and this gives $P^{(v)}(x)=(x^6-x^3-x-1)^2$.

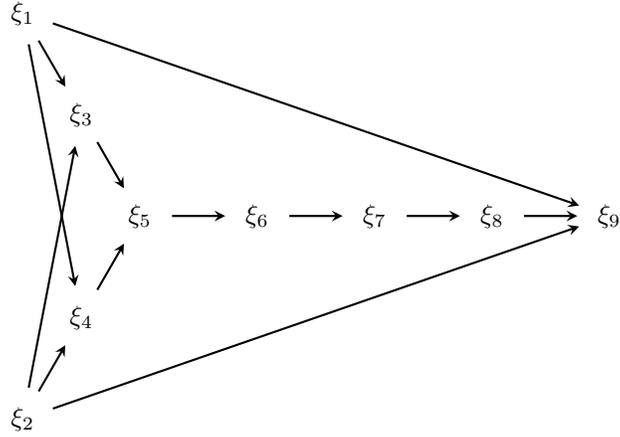
\begin{figure}
\begin{center}
\begin{tikzpicture}[node distance=1cm]
\node (xi1) [SCC] {$\xi_1$};
\node (xi3) [SCC, xshift=0.78cm, yshift=-1.351cm] {$\xi_3$};
\node (xi5) [SCC, xshift=1.56cm, yshift=-2.702cm] {$\xi_5$};
\node (xi4) [SCC, xshift=0.78cm, yshift=-4.053cm] {$\xi_4$};
\node (xi2) [SCC, xshift=0cm, yshift=-5.404cm] {$\xi_2$};
\node (xi6) [SCC, xshift=3.12cm, yshift=-2.702cm] {$\xi_6$};
\node (xi7) [SCC, xshift=4.68cm, yshift=-2.702cm] {$\xi_7$};
\node (xi8) [SCC, xshift=6.24cm, yshift=-2.702cm] {$\xi_8$};
\node (xi9) [SCC, xshift=7.8cm, yshift=-2.702cm] {$\xi_9$};
\draw [arrow] (xi1) -- (xi3);
\draw [arrow] (xi1) -- (xi4);
\draw [arrow] (xi1) -- (xi9);
\draw [arrow] (xi2) -- (xi3);
\draw [arrow] (xi2) -- (xi4);
\draw [arrow] (xi2) -- (xi9);
\draw [arrow] (xi3) -- (xi5);
\draw [arrow] (xi4) -- (xi5);
\draw [arrow] (xi5) -- (xi6);
\draw [arrow] (xi6) -- (xi7);
\draw [arrow] (xi7) -- (xi8);
\draw [arrow] (xi8) -- (xi9);
\end{tikzpicture}
\caption{Condensation of $\rho((4, 16, \{(L_2, L_3), (R_1, R_2)\}))$}
\label{condensation}
\end{center}
\end{figure}

Generally, checking divisors ought to be done after executing the flowchart like we did in the example with a single SCC.

\section{Results}
For $k=3$, let the polynomials $P^{(1)} \dotsc, P^{(8)}$ be given by
\begin{align*}
P^{(1)}(x) & = x-1 \\
P^{(2)}(x) & = x+1 \\
P^{(3)}(x) & = x^2+1 \\
P^{(4)}(x) & = x^2-x+1 \\
P^{(5)}(x) & = x^2+x+1 \\
P^{(6)}(x) & = x^4-x^2+1 \\
P^{(7)}(x) & = x^6-x^3-x-1 \\
P^{(8)}(x) & = x^{14}-x^{12}-x^8+x^6-x^4-2x^2-1
\end{align*}
Table~\ref{charsigma3} gives the minimal universal characteristic polynomials for each class of SCCs under graph isomorphism.

\begin{table}
\begin{center}
\begin{tabular}{|c|c|}\hline
SCC size & Characteristic polynomial \\ \hline
1 & 1 \\
2 & $P^{(1)}(x) P^{(2)}(x)$ \\
3 & $P^{(1)}(x) P^{(5)}(x)$ \\
4 & $P^{(1)}(x) P^{(2)}(x) P^{(3)}(x)$ \\
12 & $P^{(1)}(x) P^{(2)}(x) P^{(3)}(x) P^{(4)}(x) P^{(5)}(x) P^{(6)}(x)$ \\
13 & $P^{(7)}(x)$ \\
24 & $P^{(8)}(x)$ \\ \hline
\end{tabular}
\caption{Minimal universal characteristic polynomials for SCCs in $\sigma_3$}
\label{charsigma3}
\end{center}
\end{table}

Algorithm 1 yields $$P(x)=P_1(x) P_2(x) P_3(x) P_4(x) P_5(x) P_6(x) \left(P_7(x)\right)^2 P_8(x)$$ of degree 38 as a characteristic polynomial on $(\sigma_3, I_3)$, which we have verified to be minimal by checking all maximal divisors. The corresponding recurrence relation $$P_{38}u_n + P_{37}u_{n-1} + \dotsc + P_1 u_{n-37} + P_0 u_{n-38}=0$$ holds for each signature for $n=45$, and thus, by Lemma~\ref{recurrence}, for all $n \geq 45$. It follows that it holds for $u_n=h_3(n)$ for $n \geq 45$. However, we have $h_3(1) = 1$ and $h_3(2)=4$ and can verify that the recurrence relation also holds for $39 \leq n < 45$, from which we can conclude that it holds for all $n \geq 39$. 

Similarly for $k=4$, let
\begin{align*}
Q^{(1)}(x) = & x-1 \\
Q^{(2)}(x) = & x+1 \\
Q^{(3)}(x) = & x^2+1 \\
Q^{(4)}(x) = & x^4+1 \\
Q^{(5)}(x) = & x^4-x^3+x^2-x+1 \\
Q^{(6)}(x) = & x^4+x^3+x^2+x+1 \\
Q^{(7)}(x) = & x^8-x^6+x^4-x^2+1 \\
Q^{(8)}(x) = & x^8-x^4-x^2+x-1 \\
Q^{(9)}(x) = & x^8-x^4-x^2-x-1 \\
Q^{(10)}(x) = & x^{10}+x^8-2x^6-x^4+x^2-1 \\
Q^{(11)}(x) = & x^{21}-x^{20}+x^{19}-2x^{18}+2x^{17}-3x^{16}+3x^{15}-3x^{14}+3x^{13}-3x^{12} \\
& +2x^{11}-2x^{10}+x^9-2x^8+2x^7-x^6+x^5-x^4+x^3-x^2+x-1 \\
Q^{(12)}(x) = & x^{22}-2x^{18}-x^{15}-2x^{14}-x^{12}+x^{11}+3x^{10} \\
& -x^9-x^8+2x^6+x^5+x^4-x^3+x+1 \\
Q^{(13)}(x) = & x^{37}+x^{34}-x^{29}+x^{27}-x^{25}+x^{20}+3x^{17} \\
& +3x^{14}+8x^{12}+x^9+3x^7-3x^5-1
\end{align*}
Table~\ref{charsigma4} gives the minimal universal characteristic polynomials for each class of SCCs.

\begin{table}
\begin{center}
\begin{tabular}{|c|c|}\hline
SCC size & Characteristic polynomial \\ \hline
1A & 1 \\
1B & $Q^{(1)}(x)$ \\
2 & $Q^{(1)}(x) Q^{(2)}(x)$ \\
4 & $Q^{(1)}(x) Q^{(2)}(x) Q^{(3)}(x)$ \\
5 & $Q^{(1)}(x) Q^{(6)}(x)$ \\
10 & $Q^{(1)}(x) Q^{(2)}(x) Q^{(5)}(x) Q^{(6)}(x)$ \\
20 & $Q^{(1)}(x) Q^{(2)}(x) Q^{(3)}(x) Q^{(5)}(x) Q^{(6)}(x) Q^{(7)}(x)$ \\
46 & $Q^{(2)}(x) Q^{(11)}(x)$ \\
66 & $Q^{(1)}(x) Q^{(2)}(x) Q^{(3)}(x) Q^{(12)}(x)$ \\
114 & $Q^{(1)}(x) Q^{(2)}(x) Q^{(4)}(x) Q^{(8)}(x) Q^{(9)}(x) Q^{(10)}(x)$ \\
122 & $Q^{(2)}(x) Q^{(3)}(x) Q^{(5)}(x) Q^{(7)}(x) Q^{(11)}(x) Q^{(13)}(x)$ \\ \hline
\end{tabular}
\caption{Minimal universal characteristic polynomials for SCCs in $\sigma_4$}
\label{charsigma4}
\end{center}
\end{table}

Again using Algorithm 1 and checking divisors, the minimal characteristic polynomial on $(\sigma_4, I_4)$ was found to be
\begin{align}
& \left(Q^{(1)}(x) Q^{(2)}(x) Q^{(3)}(x) Q^{(5)}(x) Q^{(6)}(x) Q^{(7)}(x) Q^{(11)}(x)\right)^2 \nonumber \\
& Q^{(4)}(x) Q^{(8)}(x) Q^{(9)}(x) Q^{(10)}(x) Q^{(12)}(x) Q^{(13)}(x)
\end{align}
of degree 171.

Let 
\begin{align*}
Q(x) = & Q^{(1)}(x) \left(Q^{(2)}(x) Q^{(3)}(x) Q^{(5)}(x) Q^{(7)}(x) Q^{(11)}(x) \right)^2 \\
& Q^{(4)}(x) Q^{(8)}(x) Q^{(9)}(x) Q^{(10)}(x) Q^{(12)}(x) Q^{(13)}(x)
\end{align*}
of degree 162, such that (13) is equal to $$Q(x) Q^{(1)}(x) \left(Q^{(6)}(x)\right)^2.$$ There are only 25 signatures $(n_L, n_R, \lambda)$ for which $Q(x) Q^{(1)}(x) Q^{(6)}(x)$ is \textit{not} a characteristic polynomial for $\{u^{(n_L, n_R, \lambda)}_n\}$ for $n=176$, and none of them can reach any of the signatures corresponding to Hamiltonian cycles in $G(n, 4)$. Therefore, $Q(x) Q^{(1)}(x) Q^{(6)}(x)$ is a characteristic polynomial for $h_4(n)$ for $n \geq 176$ by Lemma~\ref{recurrence}, and since $Q^{(1)}(x) Q^{(6)}(x) = x^5-1$, it follows that $$Q_{162}h_4(n) + Q_{161}h_4(n-1) + \dotsc + Q_1 h_4(n-161) + Q_0 h_4(n-162)$$ is periodic with period 5, starting from $n=171$. But all the first five terms are zero, and thus, $Q(x)$ is a characteristic polynomial for $h_4(n)$ for $n \geq 171$. Moreover, with $h_4(1)=1$, $h_4(2)=0$ and $h_4(3)=3$, we can verify that the recurrence relation also holds for $163 \leq n < 171$ and conclude that it holds for all $n \geq 163$.

\begin{theorem}
With $P$ and $Q$ defined as above, we have $$P_{38} h_3(n) + P_{37} h_3(n-1) + \dotsc + P_1 h_3(n-37) + P_0 h_3(n-38) = 0 \text{ for } n \geq 39,$$ $$Q_{162} h_4(n) + Q_{161} h_4(n-1) + \dotsc + Q_1 h_4(n-161) + Q_0 h_4(n-162) = 0 \text{ for } n \geq 163.$$ {\hfill \ensuremath{\Box}}
\end{theorem}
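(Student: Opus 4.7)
The plan is to prove the two recurrences by instantiating the general machinery set up in Sections~2 and~3 separately for $k=3$ and $k=4$, relying on the per-SCC data in Tables~\ref{charsigma3} and~\ref{charsigma4}. The first step is to run Algorithm~1 on $\sigma_k$: one computes the SCCs, classifies them up to isomorphism, picks one vertex from each sink SCC, and carries out the flowchart with $p^{(\xi)}(x)$ supplied from the appropriate table. For $k=3$ the algorithm outputs the displayed degree-38 polynomial $P(x) = P^{(1)}(x) P^{(2)}(x) P^{(3)}(x) P^{(4)}(x) P^{(5)}(x) P^{(6)}(x) (P^{(7)}(x))^2 P^{(8)}(x)$, and for $k=4$ it outputs the degree-171 polynomial in (13). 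Minimality on $(\sigma_k, I_k)$ is then verified by testing every maximal proper divisor: for each candidate, one evaluates the associated linear combination on every sequence $\{u^{(n_L, n_R, \lambda)}_n\}$ at a single sufficiently large $n$, and Lemma~\ref{recurrence} promotes any divisor that passes this test to a recurrence valid for all larger $n$.

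Having established the universal recurrence on $(\sigma_k, I_k)$, I would pass to $h_k$ via (\ref{hamiltonian}): since the sum over $H_k$ inherits any linear recurrence satisfied by each summand, the $P$-recurrence holds for $\{h_3(n)\}$ at $n=45$, and the degree-171 $Q$-based recurrence holds for $\{h_4(n)\}$ at the corresponding index. For $k=4$ the further reduction to $Q(x)$ uses the factorization noted in the text: stripping off one copy of $Q^{(1)}(x) Q^{(6)}(x) = x^5-1$ only makes the residual sequence $5$-periodic, and one checks directly that its first five values vanish on $h_4$, so $Q(x)$ itself suffices from $n=171$ onward.

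Finally, to lower the starting indices from $45$ to $39$ and from $171$ to $163$, I would compute $h_3(n)$ and $h_4(n)$ for small $n$ explicitly (using the given $h_3(1)=1$, $h_3(2)=4$, $h_4(1)=1$, $h_4(2)=0$, $h_4(3)=3$, together with direct enumeration of Hamiltonian cycles in $G(n,3)$ and $G(n,4)$ for as many further values as needed) and verify by substitution that the recurrences already hold at $n=39,\dotsc,44$ and $n=163,\dotsc,170$. The main obstacle is computational rather than conceptual: actually carrying out Algorithm~1 on the large graphs $\sigma_3$ and $\sigma_4$, and building Tables~\ref{charsigma3} and~\ref{charsigma4} by running the iterated change-of-variables procedure on each SCC isomorphism class and then searching for proper divisors that remain characteristic. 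Once those tables are in hand, the remaining steps are mechanical applications of Lemmas~\ref{product} and~\ref{recurrence}.
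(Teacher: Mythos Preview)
Your outline for $k=3$ matches the paper's argument closely and is fine. The gap is in your reduction for $k=4$.

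You write that one passes from the degree-$171$ polynomial (13) to $Q(x)$ by ``stripping off one copy of $Q^{(1)}(x)Q^{(6)}(x)=x^5-1$'' and checking that five consecutive residuals vanish. But the quotient $(13)/Q(x)$ is $Q^{(1)}(x)\bigl(Q^{(6)}(x)\bigr)^2$, of degree $9$, not $x^5-1$ of degree $5$. Removing a single factor $x^5-1$ from (13) leaves $Q(x)Q^{(6)}(x)$, not $Q(x)$; and $Q^{(6)}(x)$ by itself is not of the form $x^j-1$, so a second periodicity argument of the same kind does not apply. Equivalently, if you set $r_n=\sum_j Q_j\,h_4(n-j)$, then from (13) you only know that $r_n$ satisfies a degree-$9$ recurrence whose general solution contains terms of the form $n\cdot\zeta^n$ with $\zeta$ a primitive fifth root of unity; five vanishing values do not force $r_n\equiv 0$.

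The paper closes this gap with an extra reachability step you omit: it first checks that the degree-$167$ polynomial $Q(x)Q^{(1)}(x)Q^{(6)}(x)$ already annihilates, at $n=176$, every signature sequence \emph{except} for $25$ specific signatures, and then observes that none of those $25$ can reach any signature in $H_4$ along $\sigma_4$. Restricting to the predecessor-closed subgraph of signatures that do reach $H_4$, Lemma~\ref{recurrence} applies and gives the degree-$167$ recurrence for $h_4$. Only then does the $x^5-1$ periodicity argument legitimately drop the last five degrees to reach $Q(x)$. You need either this reachability check, or else a direct verification that nine (not five) consecutive values of $r_n$ vanish.
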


Appendix A gives the initial values of $h_3(n)$, $h_4(n)$ and the coefficients of $P(x)$ and $Q(x)$.

It might be desirable to write $h_k(n)$ as a sum of the $n$th terms of several recursive sequences, each with a characteristic polynomial of relatively small degree. One possible first step is to distinguish between admissible subgraphs of $G'(n, k)$ with an even and an odd number of loose ends on each side. Partitioning the graph of signatures $\sigma_k$ into $\sigma^{(e)}_k$ and $\sigma^{(o)}_k$ accordingly, we can replace (\ref{hamiltonian}) by
\begin{equation*}
h^{(e)}_k(n) = \sum_{(n_L, n_R, \lambda) \in H_k \cap V \left( \sigma^{(e)}_3 \right)} u^{(n_L, n_R, \lambda)}_n
\end{equation*}
and
\begin{equation*}
h^{(o)}_k(n) = \sum_{(n_L, n_R, \lambda) \in H_k \cap V \left( \sigma^{(o)}_3 \right) } u^{(n_L, n_R, \lambda)}_n
\end{equation*}
such that $h_k(n) = h^{(e)}_k(n) + h^{(o)}_k(n)$. Furthermore, as touched upon when we considered characteristic polynomials for $h_4(n)$ above, a divisor of the characteristic polynomial of the form (or dividing) $x^j-1$ can be eliminated if we introduce a periodic function of period $j$ on the right hand side of the recurrence relation.

Here is a brief summary of how this plays out for $k=3$. $V \left( \sigma^{(e)}_3 \right)$ consists of signatures with $n_L, n_R \in \{1, 4, 6, 7, 10, \dotsc\}$ (confer Table~\ref{intersections}). With the same analysis as before, we obtain the initial values $1, 2, 0, 0, 0, 8, 7, 0, 9, 12, 11, 36$ of $h_k^{(e)}(n)$ for $1 \leq n \leq 12$, and the recurrence relation
\[
\begin{split}
h_k^{(e)}(n) - 2h_k^{(e)}(n-3) - 2h_k^{(e)}(n-5)\\- h_k^{(e)}(n-6) + 2h_k^{(e)}(n-8) + 2h_k^{(e)}(n-9)\\+ h_k^{(e)}(n-10) + 2h_k^{(e)}(n-11) + h_k^{(e)}(n-12)
\end{split}=
\begin{cases}
    8 &\quad\text{if }n \equiv 2 (\operatorname{mod} 4)\\
    0 &\quad\text{otherwise}
\end{cases}
\]
for $n \geq 13$. A periodic function satisfying the same recurrence relation is given by
\[
u^{(e)}_n=
\begin{cases}
    2 &\quad\text{if }n \equiv 2 (\operatorname{mod} 4)\\
    0 &\quad\text{otherwise.}
\end{cases}
\]
Similarly, $V \left( \sigma^{(o)}_3 \right)$ consists of signatures with $n_L, n_R \in \{2, 3, 5, 8, 9, \dotsc\}$, the initial values of $h_k^{(o)}(n)$ for $1 \leq n \leq 14$ are $0, 2, 0, 6, 0, 8, 0, 6, 0, 12, 0, 32, 0, 58$, and
\[
\begin{split}
h_k^{(o)}(n) - h_k^{(o)}(n-2) - h_k^{(o)}(n-6)\\+ h_k^{(o)}(n-8) - h_k^{(o)}(n-10)\\- 2h_k^{(o)}(n-12) - h_k^{(o)}(n-14)
\end{split}=
\begin{cases}
    16 &\quad\text{if }n \equiv 0 \text{ or } 2 (\operatorname{mod} 12)\\
    -16 &\quad\text{if }n \equiv 8 (\operatorname{mod} 12)\\
    0 &\quad\text{otherwise}
\end{cases}
\]
for $n \geq 15$. A periodic function satisfying the same recurrence relation is given by
\[
u^{(o)}_n=
\begin{cases}
    -12 &\quad\text{if }n \equiv 0 (\operatorname{mod} 4)\\
    4 &\quad\text{if }n \equiv 4 \text{ or }8 (\operatorname{mod}12)\\
    0 &\quad\text{otherwise.}
\end{cases}
\]
Thus, taking $h_3^{(\alpha)}(n) = h_k^{(e)}(n) - u^{(e)}_n$, $h_3^{(\beta)}(n) = h_k^{(o)}(n) - u^{(o)}_n$ and $h_3^{(\gamma)}(n) = u^{(e)}_n + u^{(o)}_n$ leads to the following alternative to Theorem 5.1 for $k=3$.

\begin{proposition}
$h_3(n) = h_3^{(\alpha)}(n) + h_3^{(\beta)}(n) + h_3^{(\gamma)}(n)$ for $n \geq 1$ where the initial values of $h_3^{(\alpha)}(n)$ are $1, 0, 0, 0, 0, 6, 7, 0, 9, 10, 11, 36$ and the characteristic polynomial is $(P^{(7)}(x))^2$, the initial values of $h_3^{(\beta)}(n)$ are $0, 2, 0, 2, 0, 8, 0, 2, 0, 12,$ $0, 44, 0, 58$ and the characteristic polynomial is $P^{(8)}(x)$, and $h_3^{(\gamma)}(n)$ is periodic with the terms $0, 2, 0, 4, 0, 2, 0, 4, 0, 2, 0, -12$ repeating. {\hfill \ensuremath{\Box}}
\end{proposition}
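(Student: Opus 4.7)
The sum identity $h_3(n) = h_3^{(\alpha)}(n) + h_3^{(\beta)}(n) + h_3^{(\gamma)}(n)$ is essentially a telescope. The parity partition $\sigma_3 = \sigma_3^{(e)} \cup \sigma_3^{(o)}$ already gives $h_3(n) = h_3^{(e)}(n) + h_3^{(o)}(n)$, and unpacking the three definitions $h_3^{(\alpha)} = h_3^{(e)} - u^{(e)}_n$, $h_3^{(\beta)} = h_3^{(o)} - u^{(o)}_n$, $h_3^{(\gamma)} = u^{(e)}_n + u^{(o)}_n$ yields $h_3^{(\alpha)} + h_3^{(\beta)} + h_3^{(\gamma)} = h_3^{(e)} + h_3^{(o)} = h_3$ for every $n \geq 1$.

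The initial values follow by pointwise subtraction. Evaluating $u^{(e)}_n$ at $n = 1, \ldots, 12$ produces $0,2,0,0,0,2,0,0,0,2,0,0$; subtracting this from the tabulated initial values $1,2,0,0,0,8,7,0,9,12,11,36$ of $h_3^{(e)}$ recovers the sequence $1,0,0,0,0,6,7,0,9,10,11,36$ listed for $h_3^{(\alpha)}$, and the $h_3^{(\beta)}$ values are obtained analogously from $h_3^{(o)}$ and $u^{(o)}_n$. For the characteristic polynomials, the homogeneous part of the non-homogeneous recurrence for $h_3^{(e)}$ stated earlier has characteristic polynomial $x^{12} - 2x^9 - 2x^7 - x^6 + 2x^4 + 2x^3 + x^2 + 2x + 1$, which expands directly from $(x^6 - x^3 - x - 1)^2 = (P^{(7)}(x))^2$. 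Since $u^{(e)}_n$ is by construction a particular solution of that non-homogeneous recurrence, the difference $h_3^{(\alpha)} = h_3^{(e)} - u^{(e)}_n$ satisfies the associated homogeneous recurrence by linearity, so $(P^{(7)}(x))^2$ is a characteristic polynomial for $h_3^{(\alpha)}$. The same reasoning applied to the $h_3^{(o)}$ recurrence, whose homogeneous part is precisely $P^{(8)}(x) = x^{14} - x^{12} - x^8 + x^6 - x^4 - 2x^2 - 1$ by inspection, delivers the claim for $h_3^{(\beta)}$.

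Finally, the periodicity of $h_3^{(\gamma)} = u^{(e)}_n + u^{(o)}_n$ is immediate: $u^{(e)}_n$ has period $4$ and $u^{(o)}_n$ has period $12$, so the sum has period $\mathrm{lcm}(4,12) = 12$, and evaluating at $n = 1, \ldots, 12$ yields exactly the sequence $0,2,0,4,0,2,0,4,0,2,0,-12$ claimed in the statement. The only genuine obstacle in the whole argument is verifying that the specific periodic functions $u^{(e)}_n$ and $u^{(o)}_n$ actually solve their respective non-homogeneous recurrences; this amounts to a finite check of both sides at each residue class in one full period ($4$ classes for $u^{(e)}_n$, $12$ for $u^{(o)}_n$), after which the rest of the proposition collapses into routine bookkeeping.
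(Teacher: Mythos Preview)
Your proposal is correct and follows the paper's own approach essentially verbatim. The paper does not give a separate proof of this proposition; the argument is precisely the discussion that precedes it, namely the parity split $h_3 = h_3^{(e)} + h_3^{(o)}$, the two non-homogeneous recurrences with their particular periodic solutions $u^{(e)}_n$ and $u^{(o)}_n$, and the observation that subtracting a particular solution leaves a sequence governed by the homogeneous part, which you have reproduced accurately (including the identification of the degree-$12$ polynomial with $(P^{(7)})^2$ and the degree-$14$ polynomial with $P^{(8)}$).
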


\appendix
\section*{Appendix A: Initial values for $h_3(n)$ and $h_4(n)$  and coefficients of their characteristic polynomials}
\begin{tabular}{|ccccc|}
\hline
$n$ & $h_3(n)$ & $P_n$ & $h_4(n)$ & $Q_n$ \\ \hline
0 & & 1 & & -1 \\
1 & 1 & 2 & 1 & 2 \\
2 & 4 & 3 & 0 & -2 \\
3 & 0 & 6 & 3 & 3 \\
4 & 6 & 5 & 0 & -10 \\
5 & 0 & 6 & 5 & 12 \\
6 & 16 & 3 & 6 & -13 \\
7 & 7 & -2 & 7 & 30 \\
8 & 6 & 0 & 0 & -51 \\
9 & 9 & -6 & 3 & 38 \\
10 & 24 & -2 & 30 & -56 \\
11 & 11 & -4 & 11 & 124 \\
12 & 68 & 4 & 6 & -125 \\
13 & 26 & 0 & 26 & 91 \\
14 & 88 & -2 & 56 & -205 \\
15 & 75 & -6 & 53 & 260 \\
16 & 150 & -3 & 80 & -87 \\
17 & 102 & -10 & 34 & 150 \\
18 & 316 & -7 & 216 & -395 \\
19 & 152 & 0 & 152 & 131 \\
20 & 436 & 2 & 240 & 165 \\
21 & 399 & 6 & 192 & 205 \\
22 & 664 & 2 & 462 & 7 \\
23 & 667 & 6 & 598 & -903 \\
24 & 1352 & -4 & 750 & 680 \\
25 & 975 & -2 & 580 & -370 \\
26 & 2344 & -2 & 1300 & 1934 \\
27 & 1998 & 0 & 1245 & -2628 \\
28 & 3618 & -2 & 2156 & 1773 \\
29 & 3683 & 4 & 2117 & -3209 \\
30 & 6440 & 4 & 3346 & 5354 \\
31 & 5766 & 2 & 4123 & -4076 \\
32 & 11350 & -2 & 6432 & 4494 \\
33 & 10230 & 0 & 5822 & -8135 \\
34 & 18160 & 0 & 9418 & 7988 \\
35 & 18865 & -2 & 10400 & -6156 \\
36 & 30542 & -1 & 18954 & 10028 \\
37 & 31339 & 0 & 18944 & -11486 \\
38 & 53736 & 1 & 26144 & 7586 \\
39 & & & 30826 & -9113 \\
40 & & & 51380 & 13095 \\ \hline
\end{tabular}
\quad
\begin{tabular}{|ccc|}
\hline
$n$ & $h_4(n)$ & $Q_n$ \\ \hline
41 & 53997 & -8205 \\
42 & 75718 & 5742 \\
43 & 86344 & -10580 \\
44 & 148170 & 7544 \\
45 & 158858 & -266 \\
46 & 212980 & 3517 \\
47 & 249429 & -3091 \\
48 & 404094 & -7967 \\
49 & 454818 & 10326 \\
50 & 619370 & -7766 \\
51 & 719698 & 18242 \\
52 & 1133158 & -25199 \\
53 & 1287158 & 19498 \\
54 & 1757538 & -23046 \\
55 & 2078521 & 31413 \\
56 & 3138352 & -24322 \\
57 & 3669321 & 19086 \\
58 & 5046406 & -27751 \\
59 & 5969030 & 26018 \\
60 & 8756276 & -18757 \\
61 & 10333827 & 26544 \\
62 & 14336756 & -32452 \\
63 & 17180271 & 22362 \\
64 & 24489280 & -21291 \\
65 & 29332438 & 24233 \\
66 & 40707530 & -9473 \\
67 & 49039712 & -6804 \\
68 & 68644708 & 10117 \\
69 & 82999229 & -23260 \\
70 & 115391926 & 40604 \\
71 & 140133552 & -39327 \\
72 & 192987894 & 33736 \\
73 & 235458361 & -36746 \\
74 & 326022466 & 27198 \\
75 & 398576453 & -10072 \\
76 & 543868768 & 7580 \\
77 & 668462960 & -7641 \\
78 & 921571358 & 1552 \\
79 & 1133200253 & -8165 \\
80 & 1534180524 & 24431 \\
81 & 1898214186 & -29303 \\ \hline
\end{tabular}

\begin{tabular}{|ccc|}
\hline
$n$ & $h_4(n)$ & $Q_n$ \\ \hline
82 & 2601335036 & 33015 \\
83 & 3217850655 & -40137 \\
84 & 4336332804 & 34426 \\
85 & 5393967609 & -16912 \\
86 & 7345848928 & 4473 \\
87 & 9131563039 & 11028 \\
88 & 12260475150 & -31833 \\
89 & 15327541552 & 42174 \\
90 & 20748259282 & -41404 \\
91 & 25918078745 & 39960 \\
92 & 34700887198 & -31311 \\
93 & 43555191769 & 13171 \\
94 & 58629617490 & 787 \\
95 & 73554425379 & -9286 \\
96 & 98250605966 & 17737 \\
97 & 123781143317 & -19638 \\
98 & 165799513320 & 11823 \\
99 & 208828178672 & -3909 \\
100 & 278309480680 & -3465 \\
101 & 351735275884 & 13053 \\
102 & 469167706296 & -18753 \\
103 & 593050414339 & 17089 \\
104 & 788699042066 & -13954 \\
105 & 999630887903 & 8952 \\
106 & 1328619760504 & -216 \\
107 & 1684786193306 & -7352 \\
108 & 2235846181590 & 11071 \\
109 & 2841083989352 & -13733 \\
110 & 3765130309158 & 14258 \\
111 & 4788053484820 & -10650 \\
112 & 6341062586720 & 6458 \\
113 & 8076169517439 & -2911 \\
114 & 10676699727520 & -1068 \\
115 & 13611490424942 & 4344 \\
116 & 17990921225536 & -5168 \\
117 & 22962336522505 & 5384 \\
118 & 30294726883974 & -5326 \\
119 & 38707246756884 & 4113 \\
120 & 51065984371156 & -2619 \\
121 & 65302052798697 & 1677 \\
122 & 86006615216140 & -532 \\ \hline
\end{tabular}
\quad
\begin{tabular}{|ccc|}
\hline
$n$ & $h_4(n)$ & $Q_n$ \\ \hline
123 & 110103773896354 & -854 \\
124 & 145010577267434 & 1600 \\
125 & 185761373321580 & -2266 \\
126 & 244298061980386 & 2975 \\
127 & 313278174180143 & -3287 \\
128 & 411958560266624 & 3051 \\
129 & 528566740244249 & -2690 \\
130 & 694237816258292 & 1946 \\
131 & 891605743330819 & -830 \\
132 & 1170830974647758 & -229 \\
133 & 1504404142946959 & 1128 \\
134 & 1973697694412892 & -1880 \\
135 & 2538183157473095 & 2277 \\
136 & 3328991719090480 & -2184 \\
137 & 4282979219051053 & 1866 \\
138 & 5613375501130086 & -1325 \\
139 & 7227354535798072 & 664 \\
140 & 9468956256988646 & -55 \\
141 & 12196622849583337 & -386 \\
142 & 15970732181345526 & 686 \\
143 & 20584346089684544 & -804 \\
144 & 26943410940175302 & 733 \\
145 & 34740880947606620 & -583 \\
146 & 45454097795928882 & 387 \\
147 & 58639873463485310 & -184 \\
148 & 76692548521685142 & 15 \\
149 & 98978960938944210 & 82 \\
150 & 129407167837177966 & -136 \\
151 & 167087245796774770 & 141 \\
152 & 218370361987361104 & -112 \\
153 & 282059107544821099 & 80 \\
154 & 368529000886276550 & -46 \\
155 & 476193350045990337 & 20 \\
156 & 621960955996700508 & 0 \\
157 & 803944918169270089 & -9 \\
158 & 1049795010065254566 & 12 \\
159 & 1357402794266151258 & -11 \\
160 & 1771950050554423404 & 6 \\
161 & 2291899313078463756 & -3 \\
162 & 2991220108882081740 & 1 \\
& & \\ \hline
\end{tabular}

\end{document}